\newtheorem{teo}{Theorem}[section]
\newtheorem{claim}[teo]{Claim}
\newtheorem{prop}[teo]{Proposition}
\theoremstyle{remark}
\theoremstyle{definition}
\newtheorem{definition}[teo]{Definition}
\DeclareMathOperator{\tp}{tp}
\title{Hyperdefinability of the Lie Model for approximate subgroups}
\author{Beatrice Degasperi}
\thanks{\emph{2020 Mathematics Subject Classification.} 03C60, 03C98, 20A15.\\ 
\emph{Keywords and phrases.} Lie model,  approximate subgroups, quasi-homomorphisms, hyperdefinability.\\ 
This project was partially funded by the Université Franco Italienne by the Vinci project ZAMD\_UIF\_VINCI2\_23\_01.}
\begin{document}
\maketitle

\begin{abstract} In  \cite[Theorem 4.2]{HR2} Hrushovski proves the Lie model theorem in full generality with model theoretic methods. The theorem states that for every approximate group there exists a generalized definable locally compact model, which, simplifying, is a quasi-homomorphism from the group generated by the approximate subgroup to a locally compact group with some particular properties. In  \cite[Theorem 3.25]{KP} Pillay and Krupinski prove the same theorem using topological dynamics on a locally compact type space. In this paper we study the definability of the locally compact group image of the quasi-homomorphism in this second proof. We show that it is isomorphic as a topological group to a relatively hyperdefinable locally compact group. 
\end{abstract}

\vspace{0.50cm}

\section{Introduction}
The aim of this paper is to study the definability properties of the Lie Model theorem in \cite[Theorem 3.25]{KP}. This theorem was first proved in \cite[Theorem 4.2]{HR2} by Hrushovski in full generality. \\
It deals with $k$-approximate groups. The notion was introduced by Tao in \cite{TA}. 

\begin{definition}
     A \textbf{$k$-approximate group} is a subset $X$ of a group $G$ which is symmetric (i.e. $X^{-1}=X$) and such that $X^2\subseteq EX$ where $E\subseteq G$ is a set of cardinality $k$.
\end{definition}

Here $X^2$ denotes the set $\{xx' \ | \ x,x'\in X\}$, $EX$ analogously denotes the set $\{ ex \ |  \ e \in E  \text{ and } x \in X\}$ and $X^{-1}=\{x^{-1} \ | \ x  \in X\}$.\\

Before the general theorem, Hrushovski proved the Lie Model theorem for pseudofinite groups in \cite{HR}. The theorem, simplifying, states that there is a subset $Y$ of $X^4$ such that finitely many translates of $Y$ cover $X^4$ and there is a homomorphism of groups from $\langle X\rangle$ to a connected Lie group with kernel in $Y$. This led to the asymptotic classification of all finite approximate groups by Breuillard, Green and Tao in \cite{BGT}. \\
Hrushovski's proof is model-theoretic and requires an expansion of the language. Massicot and Wagner showed local hyperdefinability of the Lie group in the original language in \cite{MW}.\\ 

For the generalized version of the theorem for arbitrary infinite approximate groups, the homomorphism is replaced by a quasi-homomorphism. We recall the definition:

\begin{definition}
A \textbf{quasi-homomorphism} with error set $C$ is a function $f:G\rightarrow H$ where $G$ and $H$ are groups and $C\subseteq H$ is a set containing both $\{f(y)^{-1}f(x)^{-1}f(xy)\mid x, y \in G\}$ and $\{f(xy)f(y)^{-1}f(x)^{-1} \mid x, y \in G\}$ which is small in some sense. We write $f:G\rightarrow H:C$.  
\end{definition}

The general version of the Lie model theorem was first proved by Hrushovski in \cite[Theorem 4.2]{HR2}.

\begin{teo}
    For every approximate subgroup $X$ there is a quasi-homomorphism $f: \langle X\rangle \rightarrow H:C$ with $H$ a second countable locally compact topological group and $C$ a compact normal error set such that:
\begin{enumerate}
        \item For $K\subseteq H$ compact $f^{-1}(K)$ is contained in some $X^{n}$,
        \item For each $n\in\mathbb{N}$ there exists a compact $K\subseteq H$ with $X^n\subseteq f^{-1}(K)$,
        \item Every two compact subsets $K$, $K'$ of $H$ with $C^2K\cap C^2K'=\emptyset$ have preimage separated by a definable set.
    \end{enumerate} 
    Moreover, $f^{-1}(C)\subseteq X^{12}$.
\end{teo}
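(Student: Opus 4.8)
I would prove this statement --- Hrushovski's Lie model theorem --- along the topological--dynamics lines of \cite{KP}, so that the locally compact group $H$ emerges as an Ellis group. First, I would reduce to a definable setting: passing to a sufficiently saturated elementary extension of $\langle X\rangle$ in the language with a predicate for $X$ (call $X^{*}$ the distinguished subset there), we may assume that $X$ is a definable $k$-approximate subgroup of $\mathbf G=\langle X\rangle$ with every power $X^{n}$ definable; the theorem for the extension restricts back to the original $X$ because $(X^{*})^{n}\cap\langle X\rangle=X^{n}$ by elementarity, so properties (1)--(3) and the bound on $f^{-1}(C)$ all descend, the group $H$ and the set $C$ being unchanged. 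Fix a \emph{countable} model $M$ over which $X$ is defined, working in a countable language --- this is what will make $H$ second countable. Let $S=\bigcup_{n}S_{X^{n}}$ be the space of global types concentrated on $\mathbf G=\bigcup_{n}X^{n}$, a locally compact space on which $\mathbf G$ acts by left translation through homeomorphisms.

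The heart of the proof is to extract from the flow $(\mathbf G,S)$ a subgroup $\Gamma\le\mathbf G$, type-definable over $M$, that is small seen from $\mathbf G$ but large seen from $X$: precisely, $\Gamma\subseteq X^{12}$ and $\Gamma$ is commensurable with $X$, i.e. each $X^{n}$ is covered by finitely many left cosets of $\Gamma$ (so $\Gamma$ has unbounded index in $\mathbf G$). Since approximate groups satisfy no stability-theoretic chain condition, one cannot invoke Hrushovski's stabilizer theorem directly; instead, as in \cite{MW}, the $k$-approximate hypothesis supplies a left-invariant finitely additive probability measure on a suitable Boolean algebra of $M$-definable subsets of $\mathbf G$, and hence a ``wide'' type $p\in S_{X}$. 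Passing to a minimal subflow of $(\mathbf G,S)$, an idempotent $u$ of the associated Ellis semigroup, and $\Gamma=\mathrm{Stab}(up)$, one gets type-definability of $\Gamma$ (over $M$, as an intersection of countably many $M$-definable sets) and commensurability with $X$ by standard manipulations, while the inclusion $\Gamma\subseteq X^{12}$ drops out of product-length bookkeeping, $X^{12}=X^{4}\cdot X^{4}\cdot X^{4}$ being the natural bound on the stabilizer of a type living in $X^{4}$. I expect this to be the main obstacle: securing type-definability, commensurability with $X$, and the \emph{explicit} inclusion in $X^{12}$ all at once is where essentially all of the work lies.

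Given $\Gamma$, I would assemble $H$, $f$ and $C$ as follows. Equip the coset space $\mathbf G/\Gamma$ with the logic topology --- a set is closed iff its preimage in $\mathbf G$ is type-definable over $M$, equivalently relatively definable inside each $X^{n}$. Commensurability of $\Gamma$ with $X$ makes the image of each $X^{n}$ compact with these images exhausting $\mathbf G/\Gamma$, so $\mathbf G/\Gamma$ is locally compact Hausdorff and second countable, and $\mathbf G$ acts on it by homeomorphisms with dense orbit. Let $H$ be the Ellis group of the flow $(\mathbf G,\mathbf G/\Gamma)$ after passing to the Hausdorff quotient for the $\tau$-topology; its local compactness and second countability reflect those of the phase space, and the orbit map at the basepoint $\overline\Gamma$ followed by the canonical map into $H$ defines $f\colon\mathbf G\to H$, whose failure to be a homomorphism --- a twisting by the idempotent $u$ --- is absorbed into a compact normal error set $C$, which one takes to be the $H$-stabilizer of a fixed compact neighbourhood of $\overline\Gamma$. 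The four assertions then follow by unwinding definitions: for compact $K\subseteq H$, the set $f^{-1}(K)$ moves $\overline\Gamma$ within a compact subset of $\mathbf G/\Gamma$, hence lies in finitely many $\Gamma$-cosets, hence in some $X^{m}$, giving (1); the compact image of $X^{n}$ together with a compact stabilizer neighbourhood yields a compact $K$ with $X^{n}\subseteq f^{-1}(K)$, giving (2); if $C^{2}K\cap C^{2}K'=\emptyset$ then the preimages of $K$ and $K'$ are disjoint compact subsets of the locally compact $\mathbf G/\Gamma$, separated in the logic topology by a clopen set --- the image of an $M$-definable set --- whose full preimage in $\mathbf G$ is the required definable separator, giving (3); and $f^{-1}(C)\subseteq X^{12}$ follows from $\Gamma\subseteq X^{12}$ together with the choice of $C$. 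The remaining points --- continuity and properness of the actions, Hausdorffness, second countability, and normality and compactness of $C$ --- are routine and I would verify them directly.
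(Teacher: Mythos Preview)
The paper does not give its own proof of this statement: it is quoted from \cite[Theorem~4.2]{HR2} as background, and the nearest thing to an argument in the paper is the summary of the Krupi\'nski--Pillay construction in Section~\ref{sec 2}, which produces the weaker bound $f^{-1}(C)\subseteq X^{30}$ rather than $X^{12}$.

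Your proposal has a genuine gap at its centre. You assert that ``the $k$-approximate hypothesis supplies a left-invariant finitely additive probability measure'' and invoke \cite{MW}, but Massicot--Wagner assume definable amenability as an \emph{additional} hypothesis; an arbitrary definable approximate subgroup need not carry such a measure. Without it, there is no reason for a type-definable subgroup $\Gamma\subseteq X^{12}$ commensurable with $X$ to exist --- and indeed, if such a $\Gamma$ were always available, the projection $\mathbf G\to\mathbf G/\Gamma$ with the logic topology would already be a genuine homomorphism to a locally compact group, and quasi-homomorphisms would be unnecessary. The entire point of Hrushovski's generalization in \cite{HR2}, and of the \cite{KP} reworking, is to cover the case where no such $\Gamma$ exists. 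Your step ``passing to a minimal subflow \dots and $\Gamma=\mathrm{Stab}(up)$, one gets type-definability of $\Gamma$ and commensurability with $X$ by standard manipulations'' is exactly where the argument breaks: commensurability with $X$ does not follow without a measure or some amenability-type input.

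By contrast, the route sketched in Section~\ref{sec 2} never passes through an intermediate $\Gamma$. The target group is taken to be the Ellis group $u*\mathcal M/H(u*\mathcal M)$ of the locally compact flow $(G,S_{G,M}(N))$ itself, and $f(g)=(u*g*u)/H(u*\mathcal M)$. The error set $C$ is built from the sets $\tilde F_n$ of types of products $x_1y_1^{-1}\cdots x_ny_n^{-1}$ with $x_i\equiv_M y_i$, not as the stabilizer of a neighbourhood. This yields $f^{-1}(C)\subseteq X^{30}$; the sharper $X^{12}$ bound is specific to Hrushovski's original argument and is not recovered by the topological-dynamics approach as presented.
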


The theorem has connections with the theory of approximate lattices, in the sense that information about a given approximate lattice can be drawn from the Lie model; see, for example, \cite{SM}.\\
An alternative proof of the general Lie model theorem was given in \cite{KP} by Krupinski and Pillay. Their proof uses a generalization of topological dynamics to a locally compact space of types. In this paper, we prove that the image of the quasi-homomorphism defined in their article is homeomorphic as a topological group to a relatively hyperdefinable locally compact group. \\

\section{The Lie Model Theorem} \label{sec 2}
In this section we shall present the Lie Model theorem as stated in \cite[Theorem 3.25]{KP}. First, we recall the definition of a generalized definable locally compact model.

\begin{definition}
    Let $X$ be an approximate group, $G=\langle X\rangle$. A \textbf{generalized definable locally compact model} of $X$ is a quasi-homomorphism  $f:G\rightarrow H:C$ where $H$ is a Hausdorff locally compact group and $C$ is a normal symmetric compact subset of $H$ such that:
    \begin{enumerate}
        \item for every compact $K\subseteq H$ there is $n\in\mathbb{N}$ with $f^{-1}(K)\subseteq X^n$
        \item for every $n\in\mathbb{N}$ the closure $\overline{f(X^n)}$ is compact in $H$
        \item there is an $l\in\mathbb{N}$ such that for any compact $Z,Y\subseteq H$ with $C^lY\cap C^lZ=\emptyset$ the preimages $f^{-1}(Y)$ and $f^{-1}(Z)$ can be separated by a definable set. 
    \end{enumerate}
\end{definition}

We now fix the context in which we are working. Let $M$ be a model in a language $\mathcal{L}$ and $X$ an approximate subgroup definable in $M$. Let $N\succcurlyeq M $ be an $|M|^{+}$-saturated elementary extension and $\mathcal{U}\succcurlyeq N$ the monster model. Define $G:=\langle X\rangle$ the group generated by $X$. We denote by $\bar X$ the interpretation of $X$ in $\mathcal{U}$ and by $\bar G$ the set $\langle\bar X\rangle$. We are working in the type space $S_{G,M}(N)=\bigcup_n S_{X^n,M}(N)$ where for every $n\in\mathds{N}$ the set $S_{X^n,M}(N)$ is the set of complete types over $N$ finitely satisfiable in $M$ concentrating in $X^n$, and $S_M(N)=\bigcap_n\{[\neg\varphi] \ | \varphi \text{ not satisfiable in $M$ }\} $ is closed in $S(N)$.\\

We endow $S_{G,M}(N)$ and all $S_{X^n,M}(N)$ with the topology induced from $S(N)$, or equivalently from $S_M(N)$. It follows that all $S_{X^n,M}(N)$ are compact, so $S_{G,M}(N)$ is locally compact. Moreover, if a set $F\subseteq S_{G,M}(N)$ is closed, then $F\cap S_{X^n.M}(N)$ is closed for all $n$; conversely, if $F\cap S_{X^n,M}(N)$ is closed for each $n$, say $F\cap S_{X^n,M}(N)=[\pi_n]\cap S_{X^n,M}(N)$ for some partial type $\pi_n$, then $F\cap S_{G,M}(N)=[\varphi(x)\lor x\not\in X^n: \varphi\in\pi_n, n\in\mathbb{N}]$. Hence the globally closed sets of $S_{G,M}(N)$ coincide with the locally closed sets, i.e. intersections of a closed set with each $S_{X^n,M}(N)$.\\

The space $S_{G,M}(N)$ can be endowed with a semigroup structure. In fact, take $p,q\in S_{G,M}(N)$. Consider the product $p*q=\tp(ab/N)$ where $a\models p$, $b\models q$ and $\tp(a/N,b)$ is a coheir over $M$. Note that this is unique by $|M|^+$-saturation of $N$. This operation is associative and left continuous, so $(S_{G,M}(N), *)$ is a left topological semigroup \cite[Lemma 3.5]{KP}.\\

\subsection{Topological dynamics}

Krupinski and Pillay's proof uses an extension of topological dynamics to build the quasi-homomorphism of the theorem. Classical topological dynamics studies flows.

\begin{definition}
    A \textbf{flow} is a pair $(G, Y)$ where $Y$ is a compact Hausdorff space and $G$ is a topological group acting continuously on $Y$
\end{definition}

We are considering the pair $(G, S_{G,M}(N))$, where the action of $G$ on $S_{G,M}(N)$ is defined as $g\tp(a/N)= \tp(ga/N)$ for $g\in G$ and $\tp(a/N)\in S_{G,M}(N)$. It is well defined and it is an action by homeomorphism. Notice that $G$ embeds in $S_{G,M}(N)$ via $g\mapsto \tp(g/N)$ so, identifying $g\in G$ with $\tp(g/N)$, the operation $*$ we previously defined extends the action of $G$ to all $S_{G,M}(N)$. Moreover, $G$ is dense in $S_{G,M}(N)$. The problem is that $S_{G,M}(N)$ is not a compact space. \\ 

For this reason,  Pillay and Krupinski extend classical topological dynamics to the locally compact space of types $S_{G,M}(N)$. They show that $S_{G,M}(N)$ has a minimal left ideal $\mathcal{M}$ \cite[Proposition 3.10]{KP} which contains an idempotent $u$ \cite[Lemma 3.12]{KP} and $u*\mathcal{M}$ is a group, the Ellis group \cite[Lemma 3.13]{KP}. As proven in \cite[Lemma 3.11]{KP}, left continuity essentially implies that $\mathcal{M}=S_{G,M}(N)*c$ with $c\in\mathcal{M}\cap S_{X,M}(N)$ is relatively closed in $S_{G,M}(N)$, but it does not necessarily hold that $u*\mathcal{M}$ is relatively closed.\\
As in classical topological dynamics, they introduce the $\tau$-topology on the group $u*\mathcal{M}$ which is coarser than the topology of $u*\mathcal{M}$ induced from $S_{G,M}(N)$. It is built from an operation $\circ$ defined in \cite[Definition 3.16]{KP}.

\begin{definition}
    For any $p\in S_{G,M}(N)$ and $Q\subseteq S_{G,M}(N)$ we define $p\circ Q$ as the set of all $r\in S_{G.M}(N)$ such that $r=\lim_ig_i*q_i$ where $(g_i)_i$ is a net in $G$ and $(q_i)_i$ is a net in $Q$ and $\lim_ig_i=p$.
\end{definition}

This operation allows us to define a closure operator \cite[Lemma 3.17]{KP} that defines the closed sets of the $\tau$-topology.

\begin{definition}
  For a subset $Q\subseteq u*\mathcal{M}$ put $cl_{\tau}(Q)=(u*\mathcal{M})\cap (u\circ Q)$. This is a closure operator.
\end{definition}

Then we can define the set $H(u*\mathcal{M})=\bigcap_V cl_{\tau}(V)$ where $V$ is ranging over all $\tau$-neighborhoods of $u$. It is a normal subgroup of $u*\mathcal{M}$ and the space $u*\mathcal{M}/H(u*\mathcal{M})$ endowed with the quotient topology of $\tau$ is a locally compact Hausdorff group \cite[Proposition 3.24]{KP}. \\

We have all the elements to define the quasi-homomorphism which will be the generalized locally compact model of $X$ in the theorem: $f:G\rightarrow u*\mathcal{M}/H(u*\mathcal{M})$ with $g\mapsto (u*g*u)/H(u*\mathcal{M})$. The error set is the following set $C$ as built in \cite[Section 3.2]{KP}:

\begin{align*}
    &F_n:=\{x_1*y_1^{-1}*\dots *x_n*y_n^{-1} \ | \ x_i, y_i \in \bar G \text{ and } x_i\equiv_M y_i \text{ for all } i\leq n\}\\
    &\tilde{F}_n:=\{\tp(a/N)\in S_{G,M}(N) \ | \ a\in F_n\}\\
    & \tilde{F}:= (\tilde{F}_7\cap u*\mathcal{M}/H(u*\mathcal{M}))^{u*\mathcal{M}/H(u*\mathcal{M})}\\
    & \ \ \  =\{y*x*y^{-1} \ | \ x\in \tilde{F}_7\cap u*\mathcal{M}/H(u*\mathcal{M}) \text{ and } y\in u*\mathcal{M}/H(u*\mathcal{M})\}\\
    &C:= cl_{\tau}(\tilde{F})\cup cl_{\tau}(\tilde{F})^{-1}
\end{align*}

The error set $C$ is small in the sense that it is contained in $(\tilde{F}_{10}\cap u*\mathcal{M})/H(u*\mathcal{M})$ \cite[Lemma 3.29]{KP}.\\
We are now ready to state the general Lie Model theorem.

\begin{teo}{\cite[Theorem 3.25]{KP}}
    The function $f$ is a generalized definable locally compact model of $X$ with compact normal symmetric error set $C$. Moreover, $f^{-1}(C)\subseteq X^{30}$ and there is a compact neighborhood $U$ of the identity in $u*\mathcal{M}/H(u*\mathcal{M})$ such that $f^{-1}(U)\subseteq X^{14}$ and $f^{-1}(UC)\subseteq X^{34}$.
\end{teo}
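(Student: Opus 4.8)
The plan is to verify in turn that $f$ is a quasi-homomorphism with error set $C$, that $C$ is normal, symmetric and compact, that the three conditions in the definition of a generalized definable locally compact model hold, and finally the numerical bounds; the fact that $u*\mathcal{M}/H(u*\mathcal{M})$ is a Hausdorff locally compact topological group is \cite[Proposition 3.24]{KP}, and I will freely use the $\tau$-quotient topology as a group topology. Write $\pi\colon u*\mathcal{M}\to u*\mathcal{M}/H(u*\mathcal{M})$ for the quotient map. First, $f$ is well defined: since $\mathcal{M}$ is a left ideal and $u$ is the identity of the group $u*\mathcal{M}$, one has $u*g*u=u*(u*g*u)*u\in u*\mathcal{M}$ for every $g\in G$. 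I will use throughout that each $S_{X^n,M}(N)$ is clopen in $S_{G,M}(N)$ and compact, that left translations of $(S_{G,M}(N),*)$ are continuous, that $\mathcal{M}$ is relatively closed \cite[Lemma 3.11]{KP}, that the elements of $G$ lie in $M$, and I fix $n_{0}$ with $u\in S_{X^{n_{0}},M}(N)$.

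\emph{Quasi-homomorphism and the error set.} Since $\pi$ is a group homomorphism, $f(x)*f(y)=\pi(u*x*u*y*u)$ and $f(xy)=\pi(u*x*y*u)$, so the content is a computation inside $(S_{G,M}(N),*)$ comparing the types $u*x*u*y*u$ and $u*x*y*u$. Realising all the types involved by coheir sequences over $M$, and using that $u$ is idempotent and finitely satisfiable in $M$ — here it is essential that $x,y\in M$, so that coheir-independence over $M$ of the $u$-realisations from $x,y$ is automatic — one writes the ``ratio'' of the two products as a product of boundedly many factors of the form $a_i b_i^{-1}$ with $a_i\equiv_M b_i$; this is exactly the calibration built into $\tilde F$, and one gets that $(f(x)*f(y))^{-1}*f(xy)$ lies, up to conjugation in $u*\mathcal{M}/H(u*\mathcal{M})$, in $\tilde F_{7}\cap u*\mathcal{M}/H(u*\mathcal{M})$, hence in $\tilde F\subseteq cl_{\tau}(\tilde F)\subseteq C$. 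As $f(xy)*(f(x)*f(y))^{-1}$ is a conjugate of $(f(x)*f(y))^{-1}*f(xy)$ and $\tilde F$ is conjugation-invariant, it too lies in $C$; thus $f$ is a quasi-homomorphism with error set $C$. That $C$ is symmetric is immediate from its definition. It is normal because $\tilde F$ is conjugation-invariant by construction and, $\tau$ being a group topology, both $cl_{\tau}$ and inversion commute with conjugation by a fixed element. It is compact because, by \cite[Lemma 3.29]{KP}, $C\subseteq(\tilde F_{10}\cap u*\mathcal{M})/H(u*\mathcal{M})$; since the elements of $\tilde F_{10}\cap u*\mathcal{M}$ concentrate on a bounded power $X^{k}$ of $X$, and since any $q\in u*\mathcal{M}\cap S_{X^{k},M}(N)$ satisfies $q=u*q$ with $q\in\mathcal{M}\cap S_{X^{k},M}(N)$, we get $C\subseteq\pi\bigl(u*(\mathcal{M}\cap S_{X^{k},M}(N))\bigr)$, which is compact (it is the image under the continuous $\pi$ of the continuous left-translate by $u$ of the compact set $\mathcal{M}\cap S_{X^{k},M}(N)$, compactness in the finer $S$-topology yielding $\tau$-compactness); as $C$ is $\tau$-closed, it is compact.

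\emph{The three conditions.} For (2), if $g\in X^{n}$ then, picking $b,b'\models u$, $u*g*u=\tp(bgb'/N)$ concentrates on $X^{n+2n_{0}}$, so exactly as above $f(X^{n})\subseteq\pi\bigl(u*(\mathcal{M}\cap S_{X^{n+2n_{0}},M}(N))\bigr)$, a compact set, whence $\overline{f(X^{n})}$ is compact. For (1), one uses — this is part of the construction in \cite[Section 3.2]{KP} — that every compact subset of $u*\mathcal{M}/H(u*\mathcal{M})$ lies inside some $\pi\bigl(u*(\mathcal{M}\cap S_{X^{k},M}(N))\bigr)$; then $f(g)\in K$ forces $u*g*u\in u*(\mathcal{M}\cap S_{X^{k},M}(N))*H(u*\mathcal{M})$, and since $u$ and $H(u*\mathcal{M})$ concentrate on bounded powers of $X$, the type $u*g*u$ concentrates on some fixed $X^{m}$, so $\tp(bgb'/N)$ with $b,b'\models u$ forces $g\in X^{m+2n_{0}}$, giving $f^{-1}(K)\subseteq X^{m+2n_{0}}$. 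For (3), the only defect in the arguments above is that $p\mapsto\pi(u*p*u)$ is not continuous, because right translation by $u$ is not; the point is that this defect is bounded by a fixed power $l$ of $C$: using the operation $\circ$ together with the description of $C$ via $\tilde F$, just as in the computation above, one shows that if $p=\lim_{i}\tp(g_{i}/N)$ with $f(g_{i})\in Y$ then $\pi(u*p*u)\in C^{l}Y$. Hence if $Y,Z$ are compact with $C^{l}Y\cap C^{l}Z=\emptyset$, the closures of $f^{-1}(Y)$ and of $f^{-1}(Z)$ in the Boolean space $S_{X^{m},M}(N)$ (for $m$ large enough to contain both by (1)) have disjoint images under $p\mapsto\pi(u*p*u)$, hence are themselves disjoint, hence are separated by a clopen, i.e. definable, set.

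\emph{Numerical bounds and main obstacle.} The inclusions $f^{-1}(C)\subseteq X^{30}$ and the existence of a compact identity neighbourhood $U$ with $f^{-1}(U)\subseteq X^{14}$ and $f^{-1}(UC)\subseteq X^{34}$ are obtained by running the level-count of the previous paragraph with explicit constants: for $f^{-1}(C)$ one feeds $C\subseteq(\tilde F_{10}\cap u*\mathcal{M})/H(u*\mathcal{M})$ (\cite[Lemma 3.29]{KP}), the bounded levels of $u$ and of $H(u*\mathcal{M})$, and the representation $u*g*u=\tp(bgb'/N)$ into the estimate; for $U$ one takes $\pi\bigl(u*(\mathcal{M}\cap S_{X^{j},M}(N))\bigr)$ for the least $j$ making this an identity neighbourhood, and then bounds $f^{-1}(U)$ and $f^{-1}(UC)$ the same way, using in addition that the elements of $C$ concentrate on a bounded power of $X$. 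The main obstacle is the master computation underlying both the quasi-homomorphism property and condition (3): showing that the cocycle of $f$, and the failure of continuity of $p\mapsto\pi(u*p*u)$, both land in a controlled $\tilde F_{n}$ (equivalently a fixed power of $C$) requires careful bookkeeping of coheir realisations, the idempotency and finite satisfiability of $u$, the hypothesis $G\subseteq M$, and the $\circ$-operation; hand in hand with this, verifying that the relevant $\mathcal{M}$-related sets — $\tilde F_{n}\cap u*\mathcal{M}$, $H(u*\mathcal{M})$, and compact subsets of the target — concentrate on bounded powers of $X$ is what makes the compactness statements and the explicit constants $30$, $14$, $34$ go through, and is the delicate point of the argument.
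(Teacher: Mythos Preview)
The paper does not prove this theorem: it is stated with attribution to \cite[Theorem 3.25]{KP} and no proof is given in the present paper, which immediately proceeds to Section~3. There is therefore no ``paper's own proof'' to compare your proposal against; the authors are simply quoting the Krupi\'nski--Pillay result as background for their subsequent hyperdefinability analysis.

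As a side remark on your sketch itself: the overall architecture --- verifying the quasi-homomorphism property via coheir bookkeeping landing in $\tilde F_7$, compactness of $C$ via \cite[Lemma 3.29]{KP} and the compactness of $\mathcal{M}\cap S_{X^k,M}(N)$, conditions (1)--(3) via level-counting in powers of $X$, and separation via disjoint closures in a compact type space --- matches the strategy of \cite{KP}. The part you flag as the ``main obstacle'' (the master computation controlling the cocycle and the discontinuity of $p\mapsto\pi(u*p*u)$ by a fixed power of $C$) is indeed where the work lies in \cite{KP}, and your sketch does not actually carry it out; nor do you justify the specific constants $30$, $14$, $34$, which in \cite{KP} come from explicit bounds on the level of $u$ and of $H(u*\mathcal{M})$ that you leave as $n_0$ and unspecified. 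If you intend this as a genuine proof rather than a summary, those two points would need to be filled in with the precise arguments from \cite[Section 3.2]{KP}.
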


\section{Definability of $u*\mathcal{M}/{\sim}$}

In this section, we build the relatively hyperdefinable group homeomorphic to the image. We use the following result in \cite[Proposition A.41]{RZ}. We denote by $\overline{u*\mathcal{M}}$ the closure in the induced topology on $S_{G,M}(N)$, so $\overline{u*\mathcal{M}}$ is relatively type-definable. From now on, we write $H$ for $H(u*\mathcal{M})$.

\begin{prop}\label{function}
    Denote by $\zeta:\overline{u*\mathcal{M}}\rightarrow u*\mathcal{M}$ the function that maps $m\mapsto u*m$ and by $\xi:u*\mathcal{M}\rightarrow u*\mathcal{M}/H$ the canonical projection to the quotient. Then the map $\eta=\xi\circ\zeta:\overline{u*\mathcal{M}}\rightarrow u*\mathcal{M}/H$ is continuous.   
\end{prop}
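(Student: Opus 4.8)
The plan is to establish continuity of $\eta = \xi \circ \zeta$ by showing that $\zeta$ is continuous and then invoking the continuity of $\xi$, with the caveat that $\xi$ is a quotient map for the $\tau$-topology on $u*\mathcal{M}$, not the induced topology, so the composition must be handled with some care. First I would recall that $\zeta$ maps $m \mapsto u*m$; by left continuity of the semigroup operation $*$ on $S_{G,M}(N)$ (from \cite[Lemma 3.5]{KP}), the map $m \mapsto u*m$ is continuous on all of $S_{G,M}(N)$, hence its restriction to the relatively type-definable set $\overline{u*\mathcal{M}}$ is continuous into $S_{G,M}(N)$, with image landing in $u*\mathcal{M}$ (since $u*m \in u*\mathcal{M} \subseteq \mathcal{M}$ as $\mathcal{M}$ is a left ideal and $u*m \in u*\mathcal{M}$ because $m \in \overline{u*\mathcal{M}}$ forces $u*m = u*u*m' $-type closure arguments, using that $u$ is idempotent and $\mathcal{M}$ is relatively closed). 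So $\zeta$ is continuous for the topology on $u*\mathcal{M}$ induced from $S_{G,M}(N)$.

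Next I would address $\xi$. The delicate point is that $\xi: u*\mathcal{M} \to u*\mathcal{M}/H$ is the canonical projection, and $u*\mathcal{M}/H$ carries the quotient of the $\tau$-topology, while $\tau$ is \emph{coarser} than the induced topology on $u*\mathcal{M}$. Since $\tau$ is coarser, the identity map $(u*\mathcal{M}, \text{induced}) \to (u*\mathcal{M}, \tau)$ is continuous, and composing with the $\tau$-quotient map $\xi$ (continuous by definition of the quotient topology) shows that $\xi$ is continuous from the induced topology to $u*\mathcal{M}/H$. Therefore $\eta = \xi \circ \zeta$ is a composition of continuous maps and is itself continuous. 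I would phrase this cleanly by noting that the topology on $u*\mathcal{M}/H$ can be characterized via $cl_\tau$ and the closure operator $Q \mapsto (u*\mathcal{M}) \cap (u \circ Q)$, so a set in $u*\mathcal{M}/H$ is closed iff its preimage under $\xi$ is $\tau$-closed, and $\tau$-closed sets are in particular closed in the induced topology; pulling back further along the continuous $\zeta$ gives closed sets in $\overline{u*\mathcal{M}}$.

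The main obstacle I anticipate is not the continuity of $\zeta$ or the coarseness comparison, but verifying carefully that $\zeta$ genuinely lands in $u*\mathcal{M}$ and interacts correctly with the idempotent $u$ — i.e., that for $m \in \overline{u*\mathcal{M}}$ we really have $u*m \in u*\mathcal{M}$ and not merely in $\mathcal{M}$. This requires using that $\overline{u*\mathcal{M}} \subseteq \mathcal{M}$ (since $\mathcal{M}$ is relatively closed by \cite[Lemma 3.11]{KP}), so $m \in \mathcal{M}$, hence $m = v*m$ for some idempotent, and then $u*m = u*(v*m)$; one must check $u*m$ lies in the group $u*\mathcal{M}$, which follows since $u*m = u*(u*m)$ by idempotency of $u$ and $u*m \in u*\mathcal{M}$ by definition once $m \in \mathcal{M}$. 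A secondary subtlety is confirming that Proposition A.41 of \cite{RZ} applies in the relatively (type-)definable setting — one must check its hypotheses on $\overline{u*\mathcal{M}}$ as a relatively type-definable object, but this is routine once the continuity of each factor is in hand. I expect the write-up to be short: a paragraph isolating $\zeta$'s continuity from left continuity of $*$, a paragraph on the $\tau$-coarseness giving $\xi$'s continuity, and a one-line conclusion.
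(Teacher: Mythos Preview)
Your argument has a genuine gap at the first step. In this paper (following \cite{KP}) ``left continuous'' means that for each fixed $q$ the map $p \mapsto p*q$ is continuous; this is exactly the property used to see that $\mathcal{M}=S_{G,M}(N)*c$ is relatively closed. It does \emph{not} say that the left translation $m \mapsto u*m$ is continuous, and in general it is not: the failure of $q \mapsto p*q$ to be continuous for the induced topology is precisely why the $\tau$-topology has to be introduced in the first place. So \cite[Lemma 3.5]{KP} does not give continuity of $\zeta$, and your decomposition of $\eta$ into two individually continuous factors breaks down.

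The paper does not prove this proposition itself; it simply quotes it from \cite[Proposition A.41]{RZ}. The argument there cannot go through continuity of $\zeta$ for the induced topologies; it has to use the definition of the $\tau$-closure via $cl_\tau(Q)=(u*\mathcal{M})\cap(u\circ Q)$ together with the fact that sets of the form $u\circ Q$ are closed in the induced topology. Your remark that $\xi$ is continuous from the induced topology (because $\tau$ is coarser) is correct, and the obstacle you flagged --- that $\zeta$ must land in $u*\mathcal{M}$ --- is harmless, since $\overline{u*\mathcal{M}}\subseteq\mathcal{M}$ by relative closedness of $\mathcal{M}$ and hence $u*m\in u*\mathcal{M}$ immediately. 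The real obstacle is the one you did not anticipate: $\zeta$ need not be continuous, and the continuity of $\eta$ is a genuinely nontrivial feature of the $\tau$-construction rather than a formal composition of continuous maps.
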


We show that the image $u*\mathcal{M}/H$ is isomorphic as a topological group to some quotient $\overline{u*\mathcal{M}}/{\sim}$, which is relatively hyperdefinable. As they are topologically isomorphic, $\overline{u*\mathcal{M}}/ {\sim}$ is also locally compact, so it still contains a subgroup $L$ of finite index and a normal subgroup $N$ such that $L/N$ is a Lie group. We use the map $\eta$ of Proposition \ref{function}. As $\overline{u*\mathcal{M}}$ is closed in the relative topology of $S_{G,M}(N)$, there is a partial type $p$ such that $\overline{u*\mathcal{M}}=p(x)\cap S_{G,M}(N)$. We now define the quotient $\overline{u*\mathcal{M}}/{\sim}$.

\begin{definition}
    Take $a$, $b\in\overline{u*\mathcal{M}}$. Then $a\sim b$ if and only if $\eta(a)=\eta(b)$.
\end{definition}

\begin{teo}
    The group $\overline{u*\mathcal{M}}/{\sim}$ is a relatively hyperdefinable group.
\end{teo}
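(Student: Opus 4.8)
The plan is to verify the definition of a relatively hyperdefinable group directly, using the equivalence relation $\sim$ defined by the fibres of $\eta$. First I would establish that $\sim$ is a relatively type-definable equivalence relation on $\overline{u*\mathcal{M}}$: since $\overline{u*\mathcal{M}} = p(x)\cap S_{G,M}(N)$ for a partial type $p$, it suffices to show that the set $\{(a,b) : a,b\in\overline{u*\mathcal{M}},\ \eta(a)=\eta(b)\}$ is relatively type-definable in $(p(x)\cap S_{G,M}(N))^2$. Here the key observation is that $\eta(a)=\eta(b)$ means $u*a$ and $u*b$ lie in the same coset of $H = H(u*\mathcal{M})$, and $H = \bigcap_V cl_\tau(V)$ where $V$ ranges over $\tau$-neighbourhoods of $u$; combined with the fact that $u*a$ is determined from $a$ by the definable operation $*$ and left-multiplication by $u$, I would express "$(u*a)^{-1}*(u*b)\in H$'' as an intersection of (pullbacks of) the relatively closed sets $cl_\tau(V)$, each of which is relatively type-definable in the type space. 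The continuity of $\eta$ from Proposition \ref{function} is what guarantees these pullbacks behave well, and it already shows that the fibres $\eta^{-1}(\text{pt})$ are relatively closed, hence each fibre is a relatively type-definable set — this gives the "bounded-index, type-definable equivalence relation'' shape needed for hyperdefinability.

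Next I would transport the group structure. The target $u*\mathcal{M}/H$ is a group by \cite[Lemma 3.13]{KP} and \cite[Proposition 3.24]{KP}, and $\eta = \xi\circ\zeta$ is surjective onto it (since $\zeta: m\mapsto u*m$ maps $\overline{u*\mathcal{M}}$ onto $u*\mathcal{M}$, as $u$ is idempotent and $u*\mathcal{M}\subseteq\overline{u*\mathcal{M}}$, so already $\zeta$ restricted to $u*\mathcal{M}$ is the identity). Therefore $\eta$ induces a bijection $\overline{u*\mathcal{M}}/{\sim}\;\to\; u*\mathcal{M}/H$, and I would pull back the multiplication and inversion of $u*\mathcal{M}/H$ along this bijection. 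To see that the resulting operations are induced by relatively definable (indeed, relatively $\ast$-definable, using the semigroup operation $*$ on $S_{G,M}(N)$) maps on $\overline{u*\mathcal{M}}$, I would check that for $a,b\in\overline{u*\mathcal{M}}$ the product in the quotient is represented by $a*b$ — i.e. that $\eta(a*b)$ depends only on $\eta(a),\eta(b)$ and equals $\eta(a)\eta(b)$ in $u*\mathcal{M}/H$. This is the computation $u*(a*b)*H = (u*a)*(u*b)*H$ inside the Ellis group, which follows because $u*a*u = u*a$ modulo $H$ for $a$ in the closure (this is the standard fact that conjugation by $u$ is trivial on the quotient, used already in defining $f$). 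Here I may need to be slightly careful that $a*b$ again lies in $\overline{u*\mathcal{M}}$, or else replace it by a suitable relatively definable representative; left-continuity of $*$ and relative closedness of $\mathcal{M}$ should handle this.

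Finally I would record that $\sim$ has bounded index: the index equals $|u*\mathcal{M}/H|$, which is the cardinality of a locally compact Hausdorff group of bounded size (it is a quotient of a subset of $S_{G,M}(N)$, whose size is bounded in terms of $|M|$ and $|\mathcal L|$ since all types are finitely satisfiable in $M$), so it is a small set; together with the relative type-definability of $\sim$ and of the graphs of the induced operations, this is exactly the definition of a relatively hyperdefinable group over $M$ (working in $N$, with the ambient relatively type-definable set $\overline{u*\mathcal{M}}$). I expect the main obstacle to be the first step: showing that "$\eta(a)=\eta(b)$'' is genuinely relatively type-definable rather than merely an intersection of a possibly proper class of closed conditions — this requires using that $H$ is the intersection over $\tau$-neighbourhoods of $u$ of the $cl_\tau(V)$, that $cl_\tau(V) = (u*\mathcal{M})\cap(u\circ V)$, and that each $u\circ V$ is controlled (via \cite[Lemma 3.17]{KP} and relative closedness of $\mathcal M$ in $S_{G,M}(N)$) by a partial type over $N$; packaging the $\circ$-operation's limits of nets $g_i*q_i$ into a type-definable condition is the technical heart of the argument, and is exactly where continuity of $\eta$ and the relative closedness statements from Section \ref{sec 2} are indispensable.
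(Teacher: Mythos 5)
Your second and third steps (transporting the multiplication via $\eta(a*b)=\eta(a)*\eta(b)$, using idempotency of $u$, and checking well-definedness and boundedness) match what the paper does. The problem is the first step, which you yourself flag as the ``technical heart'' and do not carry out: you propose to show that $\eta(a)=\eta(b)$ is relatively type-definable by unwinding $H=\bigcap_V cl_\tau(V)$ and expressing ``$(u*a)^{-1}*(u*b)\in H$'' through pullbacks of the sets $cl_\tau(V)=(u*\mathcal{M})\cap(u\circ V)$. This route is likely to fail: the $\tau$-topology is coarser than the topology induced from $S_{G,M}(N)$, the set $u*\mathcal{M}$ need not be relatively closed in $S_{G,M}(N)$ (the paper points this out explicitly), and the $\circ$-operation is defined through limits of nets $g_i*q_i$ with $g_i\in G$, none of which is given by a partial type in any evident way; in addition, the inversion $(u*a)^{-1}$ in the Ellis group is not induced by any definable operation on the type space. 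Knowing that each fibre $\eta^{-1}(\mathrm{pt})$ is relatively closed is also not enough --- you need the relation, as a subset of $\overline{u*\mathcal{M}}^{2}$, to be cut out by a single partial type.

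The paper's proof avoids all of this and uses only two facts: $u*\mathcal{M}/H$ is a Hausdorff topological group, and $\eta$ is continuous with respect to the ambient type topology (Proposition \ref{function}). For every triple $a,b,c\in u*\mathcal{M}/H$ with $a*b\neq c$ one separates $a*b$ from $c$ by disjoint open sets and, by continuity of the multiplication, chooses open $V\ni a$, $W\ni b$, $U\ni c$ with $(V*W)\cap U=\emptyset$; the preimages of $U,V,W$ under $\eta$ are relatively open, hence unions of definable sets intersected with $\overline{u*\mathcal{M}}$, and collecting the disjunctions of the negations of these formulas over all such triples yields a partial type $\pi(x,y,z)$ equivalent on $\overline{u*\mathcal{M}}$ to $\eta(x)*\eta(y)=\eta(z)$. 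The equivalence relation is then the specialization $\pi(x,e,y)$ with $\eta(e)$ the identity, and the graph of the induced product is $\pi$ itself. If you replace your first step by this purely topological separation argument, the rest of your outline goes through.
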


\begin{proof}
  The quotient $u*\mathcal{M}/H$ is a Hausdorff space. Thus, for every $a$, $b$ and $c\in u*\mathcal{M}/H$  such that $a*b\neq c$ we can find in the quotient topology of $\tau$ an open neighborhood $O\ni a*b$ and an open neighborhood $U\ni c$ such that $O\cap U=\emptyset$. 
The quotient $u*\mathcal{M}/H$ is also a topological group, so the product $*$ is continuous. Consequently, we can find two open neighborhoods $V\ni a$ and $W\ni b$ such that $V*W\subseteq O$. \\
We now consider the preimage under the map $\eta$ of $U$, $V$ and $W$. Denote them as $\overline{U}=\eta^{-1}(U)$, $\overline{V}=\eta^{-1}(V)$ and $\overline{W}=\eta^{-1}(W)$. By continuity of the map $\eta$, the sets $\overline{U}$, $\overline{V}$ and $\overline{W}$ are open sets of the type topology restricted to $\overline{u*\mathcal{M}}$. Hence, we can find the following unions of formulas: $\overline{U}=\bigcup_{i\in I}\theta_{U_{abc}}^i\cap \overline{u*\mathcal{M}}$, $\overline{V}=\bigcup_{j\in J}\varphi_{V_{abc}}^j\cap\overline{u*\mathcal{M}}$ and $\overline{W}=\bigcup_{l\in L}\psi^l_{W_{abc}}\cap \overline{u*\mathcal{M}}$. We obtain that for every $i\in I$, $j\in J$ and $l\in L$, $\varphi^j_{V_{abc}}\cap\overline{u*\mathcal{M}}\subseteq\overline{V}$, $\psi^l_{W_{abc}}\cap\overline{u*\mathcal{M}}\subseteq \overline W$ and $\theta^i_{U_{abc}}\cap\overline{u*\mathcal{M}}\subseteq \overline{U}$. For every such triplet of elements and for every triplet of formulas satisfying this property, define the formula $\neg\varphi^j_{V_{abc}}(x)\lor\neg\psi^l_{W_{abc}}(y)\lor\neg\theta^i_{U_{abc}}(z)$. Then, we can define the following partial type $\pi(x,y,z)$:
\begin{equation*}
    \pi(x,y,z)=\Set{ \neg\varphi^j_{V_{abc}}(x)\lor\neg\psi^l_{W_{abc}}(y)\lor\neg\theta^i_{U_{abc}}(z)\ | \begin{array}{l}
    \ a,b,c \in u*\mathcal{M}/H(u*\mathcal{M})\\
    \ a*b\neq c \\
     \ U, V, W \text{as in the proof }\\
    \ i\in I, j\in J, l \in L    
  \end{array}}
\end{equation*}
\begin{claim}
    The type $\pi(x,y,z)$ is equivalent to $\eta(x)*\eta(y)=\eta(z)$ in $\overline{u*\mathcal{M}}$.
\end{claim}
\begin{proof}
    $(\Rightarrow)$ Suppose $\eta(a)*\eta(b)\neq\eta(c)$ for some $a$, $b$, $c\in\overline{u*\mathcal{M}}$. Denote by $\tilde{a}=\eta(a)$, $\tilde{b}=\eta(b)$ and $\tilde{c}=\eta(c)$. Then, as in the first part of the proof, we can find the open sets $U\ni \eta(c)$, $V\ni\eta(a)$ and $W\ni\eta(b)$ with respectively preimages $\overline{U}$, $\overline{V}$ and $\overline{W}$ under $\eta$. These sets are a union of formulas we respectively denote by $\bigcup_{i\in I}\theta_{U_{\tilde{a}\tilde{b}\tilde{c}}}^i$, $\bigcup_{j\in J}\varphi_{V_{\tilde{a}\tilde{b}\tilde{c}}}^j$ and $\bigcup_{l\in L}\psi^l_{W_{\tilde{a}\tilde{b}\tilde{c}}}$ intersected with $\overline{u*\mathcal{M}}$. Then there exist $i\in I$, $j\in J$ and $l\in L$ such that $\varphi^j_{V_{\tilde{a}\tilde{b}\tilde{c}}}(a)$, $\psi^l_{W_{\tilde{a}\tilde{b}\tilde{c}}}(b)$ and $\theta^i_{U_{\tilde{a}\tilde{b}\tilde{c}}}(c)$ hold, which implies $\varphi^j_{V_{\tilde{a}\tilde{b}\tilde{c}}}(a)\land\psi^l_{W_{\tilde{a}\tilde{b}\tilde{c}}}(b)\land\theta^i_{U_{\tilde{a}\tilde{b}\tilde{c}}}(c)$. As by construction $\neg\varphi^j_{V_{\tilde{a}\tilde{b}\tilde{c}}}\lor\neg\psi^l_{W_{\tilde{a}\tilde{b}\tilde{c}}}\lor\neg\theta^i_{U_{\tilde{a}\tilde{b}\tilde{c}}}\in\pi$ we obtain $(a,b,c)\not\models\pi(x,y,z)$.\\
    
    $(\Leftarrow)$: Suppose $a$, $b$, $c\in\overline{u*\mathcal{M}}$ such that $\not\models\pi(a,b,c)$. This means that we can find some open neighborhoods $U$, $V$ and $W$ and some $i\in I$, $j\in J$ and $l\in L$ as in the first part of the proof such that $\varphi^j_{V_{abc}}(a)\land\psi^l_{W_{abc}}(b)\land\theta^i_{U_{abc}}(c)$. Then $\eta(a)\in W$, $\eta(b)\in V$ and $\eta(c)\in U$ but $W*V\cap U=\emptyset$ so $\eta(a)*\eta(b)\neq\eta(c)$.  
\end{proof}

Then the equivalence relation $\sim$ is relatively type-definable on $\overline{u*\mathcal{M}}$ by $\pi(x,e,y)$ where $e\in \overline{u*\mathcal{M}}$ such that $\eta(e)=u* H$. In fact, $\eta(x)=\eta(y)\iff\eta(x)*u*H=\eta(y)\iff\eta(x)*\eta(e)=\eta(y)\iff \pi(x,e,y)$. \\

This means that on $\overline{u*\mathcal{M}}/{\sim}$ we can define a product $\bar*$ defined as $a/{\sim} \ \  \bar* \ \ b/{\sim}=c/{\sim}$ if and only if $\eta(a)*\eta(b)=\eta(c)$. It is well defined and $a/{\sim} \ \ \bar* \ \ b/{\sim}=a*b/{\sim}$. Indeed $a/{\sim} \ \ \bar*\ \ b/{\sim}=a*b/{\sim}$ if and only if $\eta(a)*\eta(b)=\eta(a*b)$ but $\eta(a)*\eta(b)=(u*a/H)*(u*b/H)=(u*a*u*b)/H=(u*a*b)/H=\eta(a*b)$. The product is relatively hyperdefinable by the type $\pi$. \\
\end{proof}

We want to show that $u*\mathcal{M}/H$ is isomorphic as a topological group to $\overline{u*\mathcal{M}}/{\sim}$, which is relatively hyperdefinable. In order to do so we define a new topology on $\overline{u*\mathcal{M}}/{\sim}$.

\begin{definition}
    Consider the map $\bar\eta:\overline{u*\mathcal{M}}/{\sim}\rightarrow u*\mathcal{M}/H$ defined as $\bar\eta(x/{\sim})=\eta(x)$. It is well defined by definition of $\sim$. As $u*\mathcal{M}\subseteq \overline{u*\mathcal{M}}$, by continuity of $\eta$ and by definition of $\sim$, the map $\bar\eta$ is a continuous bijection. We define a topology $\tau_{\bar\eta}$ on $\overline{u*\mathcal{M}}/{\sim}$ whose open sets are the sets which are preimage of open sets of $u*\mathcal{M}/H$ under $\bar\eta$.
\end{definition}

\begin{teo}
    The group $\overline{u*\mathcal{M}}/{\sim}$ with the topology $\tau_{\bar\eta}$ is isomorphic to $u*\mathcal{M}/H(u*\mathcal{M})$ as a topological group.
\end{teo}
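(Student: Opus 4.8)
The plan is to check directly that $\bar\eta$ is simultaneously a group isomorphism and a homeomorphism, which is essentially built into the constructions already made. First I record that $\bar\eta$ is a group homomorphism: by the definition of the product $\bar*$ on $\overline{u*\mathcal{M}}/{\sim}$ we have $a/{\sim}\ \bar*\ b/{\sim}=c/{\sim}$ exactly when $\eta(a)*\eta(b)=\eta(c)$, so
$\bar\eta(a/{\sim}\ \bar*\ b/{\sim})=\eta(c)=\eta(a)*\eta(b)=\bar\eta(a/{\sim})*\bar\eta(b/{\sim})$. Injectivity of $\bar\eta$ is immediate from the definition of $\sim$. For surjectivity I use that $u$ is the identity of the group $u*\mathcal{M}$, so that $\zeta$ restricted to $u*\mathcal{M}$ is the identity map; hence $\eta\restriction u*\mathcal{M}=\xi$, the canonical projection onto $u*\mathcal{M}/H$, which is onto, and since $u*\mathcal{M}\subseteq\overline{u*\mathcal{M}}$ already $\bar\eta$ is surjective. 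Thus $\bar\eta$ is a bijective group homomorphism, i.e. a group isomorphism; in particular the operation $\bar*$ (whose well-definedness and whose agreement with $a*b/{\sim}$ were established in the previous theorem) makes $\overline{u*\mathcal{M}}/{\sim}$ a group, with group structure transported from that of $u*\mathcal{M}/H$.

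Next I treat the topological side. The family $\tau_{\bar\eta}=\{\bar\eta^{-1}(O):O\text{ open in }u*\mathcal{M}/H\}$ is a topology, being the preimage under a map of a topology. By its very definition every $\tau_{\bar\eta}$-open set has this form, so $\bar\eta$ is open; and since preimages of open sets are open, $\bar\eta$ is continuous. Hence $\bar\eta$ is a homeomorphism from $(\overline{u*\mathcal{M}}/{\sim},\tau_{\bar\eta})$ onto $u*\mathcal{M}/H$. Combining this with the previous paragraph, $\bar\eta$ is an isomorphism of topological groups; in particular $(\overline{u*\mathcal{M}}/{\sim},\bar*,\tau_{\bar\eta})$ is itself a locally compact Hausdorff topological group, since continuity of $\bar*$ and of inversion is transported along the homeomorphism $\bar\eta$ from $u*\mathcal{M}/H$, which is a topological group by \cite[Proposition 3.24]{KP}.

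I do not expect a genuine obstacle here: the entire substance of the construction lies in the earlier results — that $\sim$ is relatively type-definable on $\overline{u*\mathcal{M}}$ via $\pi$, that $\bar*$ is relatively hyperdefinable, and that $\eta$ is continuous (Proposition \ref{function}) — while the present statement is the formal consequence that the transported-topology quotient realises $u*\mathcal{M}/H$. The only point requiring a little care is that $\tau_{\bar\eta}$ need not coincide with the quotient topology induced on $\overline{u*\mathcal{M}}/{\sim}$ by the type topology on $\overline{u*\mathcal{M}}$: indeed $\eta$ is only known to be continuous, not open, so $\bar\eta$ need not be a homeomorphism for that quotient topology. This is precisely why a separate topology $\tau_{\bar\eta}$ is introduced, and for the statement as phrased no compatibility between the two topologies has to be proved.
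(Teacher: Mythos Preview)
Your proof is correct and follows essentially the same approach as the paper's own argument. The paper's proof is simply a terser version of yours: it invokes the bijectivity of $\bar\eta$ already noted in the definition, observes that $\tau_{\bar\eta}$ makes $\bar\eta$ a homeomorphism by construction, and checks multiplicativity via $\bar\eta(a/{\sim}\ \bar*\ b/{\sim})=\eta(a*b)=\eta(a)*\eta(b)$; your additional remarks on surjectivity, on why $\tau_{\bar\eta}$ is a topology, and on the distinction from the quotient-of-type-topology are sound elaborations but not needed for the statement as phrased.
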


\begin{proof}
   By definition of $\tau_{\bar\eta}$, $\bar\eta$ is a topological homeomorphism. It preserves the product as $\bar\eta(a/{\sim} \ * \ b/{\sim})=\bar\eta(a*b/{\sim})=\eta(a*b)=\eta(a)*\eta(b)=\bar\eta(a/{\sim})*\bar\eta(b/{\sim})$. Thus, the map $\bar\eta$ is a homemomorphism of topological groups as we wanted. 
\end{proof}

\section{One-point compactification of $cl(u*\mathcal{M})$}

In the previous section, we worked on the space $\overline{u*\mathcal{M}}/{\sim}$, where $\overline{u*\mathcal{M}}$ is the relative closure of $u*\mathcal{M}$ in $S_{G,M}(N)$. This space is a locally compact Hausdorff space as it is homeomorphic to the space $u*\mathcal{M}/H$. We want to build the one-point compactification of this space. \\

Consider the closure of $u*\mathcal{M}$ in the type space $S(N)$. We denote this closure by $cl(u*\mathcal{M})$. Of course, by definition of relative topology, $\overline{u*\mathcal{M}}\subseteq cl(u*\mathcal{M})$. We denote by $Y$ the set $cl(u*\mathcal{M})\setminus \overline{u*\mathcal{M}}$. We want to extend the equivalence relation $\sim$ on the set $cl(u*\mathcal{M})$ so that there is a single equivalence class in $Y$. We also require that for every $x\in Y$ and $a\in \overline{u*\mathcal{M}}$ it holds that $x/{\sim} \ \tilde{*} \ a/{\sim}=x/{\sim}=a/{\sim} \ \tilde{*} \ x/{\sim}$ where $\tilde{*}$ is an operation that extends $\bar*$ to $cl(u*\mathcal{M})/{\sim}$.\\

To do this, we first redefine the type $\pi$ in a way such that it stays the same on $\overline{u*\mathcal{M}}$ and is always satisfied if we pick at least two elements in $Y$. We define the following type: 
\begin{equation*}
     \bar\pi(x,y,z)=\{\varphi(x,y,z)\lor x\not\in X^n\lor y\not\in X^n\lor z\not\in X^n \ | \ \varphi\in\pi \text{ and } n\in \mathbb{N}\}
\end{equation*}
In this way if one of $x$, $y$ or $z$ is in $Y$, $\bar\pi$ holds. The problem is that it also holds if we pick only one element in $Y$ and the other two in $\overline{u*\mathcal{M}}$. For this reason, we add the following formulas: 
\begin{equation*}
    \Set{\begin{array}{l}
    \ (x\in X^n\land y\in X^m)\rightarrow z\in X^{n+m}\\
    \ (x\in X^n\land z\in X^m)\rightarrow y\in X^{n+m}\\
    \ (y\in X^n\land z\in X^m)\rightarrow x\in X^{n+m}\\    
  \end{array} \ |\  n,m\in\mathbb{N}}\
\end{equation*}
At this point, as in the previous section, we define $x\sim y$ if and only if $\bar\pi(x, e, y)$. Clearly, if $x$, $y\in Y$, then it always holds. Instead, if one of $x$ or $y$ is in $\overline{u*\mathcal{M}}$ it does not hold.  Defining the product on $cl(u*\mathcal{M})/{\sim}$ as $x/{\sim} \ \tilde{*}  \ y/{\sim}= z/{\sim}$ if and only if $\bar\pi(x,y,z)$ we get that for $x\in Y$ and $a\in cl(u*\mathcal{M})$ then $x/{\sim} \ \tilde{*} \ a/{\sim}=x/{\sim}$ as we wanted. Notice that for this last property the product is well defined.  \\

In conclusion, we built a space $cl(u*\mathcal{M})/{\sim}=\overline{u*\mathcal{M}}/{\sim}\sqcup Y/{\sim}$ where $Y/{\sim}=\{x/{\sim}\}$ for $x\in Y$ is a singleton and for every $a/{\sim}\in cl(u*\mathcal{M})/{\sim}$ and $x\in Y$ their product is $x/{\sim}\ \tilde{*} \ a/{\sim}=x/{\sim}= a/ {\sim} \ \tilde{*} \ x/{\sim}$. Thus, $cl(u*\mathcal{M})/{\sim}$ is the one-point compactification of $\overline{u*\mathcal{M}}/{\sim}$. 
\

\


\begin{thebibliography}{9}

\bibitem{BGT}
Emmanuel Breuillard, Ben  Green and Terence Tao \emph{The structure of approximate groups} (2012).

\bibitem{HR2}
Ehud Hrushovski, \emph{Beyond the Lascar group} (2020).

\bibitem{HR}
Ehud Hrushovski, \emph{Stable group theory and approximate subgroups} (2011).

\bibitem{KP}
Krzysztof Krupinski and Anand Pillay, \emph{Generalized Locally Compact Models for Approximate Groups} (2023).

\bibitem{SM}
Simon Machado, \emph{Closed approximate subgroups: compactness, amenability and approximate lattices} (2023).

\bibitem{MW}
Jean-Cyrille Massicot and Frank O. Wagner, \emph{Approximate subgroups} (2015).

\bibitem{RZ}
Tomasz Rzepecki, \emph{Bounded Invariant Equivalence Relations} (2018, Wroclaw).

\bibitem{TA}
Terence Tao, \emph{Product set estimates for non-commutative groups} (2008).

\end{thebibliography}
\end{document}